\newtheorem{theorem}{Theorem}[section]
\newtheorem{proposition}[theorem]{Proposition}
\newtheorem{lemma}[theorem]{Lemma}
\newtheorem{definition}[theorem]{Definition}
\theoremstyle{plain}
\numberwithin{equation}{theorem}
\theoremstyle{remark}
\newcommand{\F}{{\mathbb F}}
\newcommand{\Kbar}{\overline{K}}
\newcommand{\tensor}{\otimes}
\DeclareMathOperator{\Frac}{Frac}
\DeclareMathOperator{\sep}{sep}
\DeclareMathOperator{\tor}{tor}
\newcommand{\bG}{{\mathbb G}}
\newcommand{\bN}{{\mathbb N}}
\newcommand{\bF}{{\mathbb F}}
\newcommand{\Fq}{\bF_q}
\newcommand{\hhat}{{\widehat h}}
\newcommand{\Drin}{\Phi}
\title{A variant of Siegel's theorem for Drinfeld modules}
\author{Simone Coccia and Dragos Ghioca}
\keywords{Drinfeld module, Heights, Diophantine approximation}
\subjclass[2010]{Primary 11G50, Secondary 11J68, 37F10}
\address{
Simone Coccia \\
Department of Mathematics\\
University of British Columbia\\
Vancouver, BC V6T 1Z2\\
Canada
}
\email{scoccia@math.ubc.ca}
\address{
Dragos Ghioca \\
Department of Mathematics\\
University of British Columbia\\
Vancouver, BC V6T 1Z2\\
Canada
}
\email{dghioca@math.ubc.ca}
\begin{document}

\begin{abstract}
  We complete the proof of a Siegel type statement for finitely generated $\Phi$-submodules of $\mathbb{G}_a$ under the action of a Drinfeld module $\Phi$.
\end{abstract}

\maketitle

\section{Introduction}\label{intro}

In 1929, Siegel (\cite{Siegel}) proved that if $C$ is an irreducible
affine curve defined over a number field $K$ and $C$ has at least
three points at infinity, then there are at most finitely many
$K$-rational points on $C$ that have integral coordinates. The two most important ingredients in the proof
of Siegel's theorem are Diophantine approximation, along with the
fact that certain groups of rational points are finitely generated;
when $C$ has genus greater than $0$, the group in question is the
Mordell-Weil group of the Jacobian of $C$, while when $C$ has genus $0$,
the group in question is the group of $S$-units in a finite extension
of $K$.  Motivated by the analogy between rank $2$ Drinfeld modules and
elliptic curves (along with the understanding that higher rank Drinfeld modules of generic characteristic are the right vehicle in characteristic $p$ for similar conjectures to Diophantine questions one would pose for abelian varieties in characteristic $0$), Ghioca and Tucker conjectured in \cite[Conjecture~5.5]{findrin} a Siegel
type statement for finitely generated $\Phi$-submodules $\Gamma$ of
$\bG_a(K)$ (where $q$ is a power of $p$, $\Phi$ is a generic characteristic Drinfeld module of arbitrary rank and $K$ is a finite extension of $\F_q(t)$). In \cite{Siegel-Drinfeld}, Ghioca and Tucker proved the Siegel type statement for Drinfeld modules under the technical hypothesis that the ground field $K$ admits a single place which lies over the place at infinity of $\F_q(t)$. In the current paper, we are able to remove this technical hypothesis on the field $K$ and prove the following general result for Drinfeld modules in the spirit of the famous Siegel's theorem. 

\begin{theorem}
\label{Siegel}
Let $q$ be a power of the prime number $p$, let $K$ be a finite extension of the function field $\Fq(t)$ and let  $\Phi$ be a Drinfeld module of generic characteristic defined over $K$. Let $\Gamma$ be a finitely generated $\Phi$-submodule of $\mathbb{G}_a(K)$, let $\alpha\in K$, and let $S$ be a finite set of places of the field $K$. Then there are finitely many $\gamma\in\Gamma$ such that $\gamma$ is $S$-integral with respect to $\alpha$.
\end{theorem}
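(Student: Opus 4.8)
The plan is to argue by contradiction, combining the height machinery for Drinfeld modules with a Diophantine approximation estimate, much as in the classical proof of Siegel's theorem and in \cite{Siegel-Drinfeld}. Write $A=\Fq[t]$, so that $\Gamma$ is a finitely generated $A$-module through the action of $\Phi$. Its torsion submodule is finite, and $\gamma_0+\gamma_1$ is $S$-integral with respect to $\alpha$ if and only if $\gamma_1$ is $S$-integral with respect to $\alpha-\gamma_0$; so one may assume $\Gamma$ is free of some rank $r\ge 1$ (the case $r=0$ being trivial), at the cost of replacing $\alpha$ by one of finitely many translates. Enlarging $S$, one may also assume that $\Phi$ has good reduction away from $S$, that all places of $K$ above the place $\infty$ of $\Fq(t)$ lie in $S$, and that the coefficients of $\Phi$, a fixed $A$-basis $\beta_1,\dots,\beta_r$ of $\Gamma$, and $\alpha$ all lie in the ring $\cO_S$ of $S$-integers. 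Recall that the canonical height $\hhat=\hhat_\Phi$ vanishes exactly on torsion points, differs from a Weil height on $\bG_a$ by $O(1)$, scales by a fixed positive power of $|a|_\infty$ under the action of $\Phi_a$, and satisfies Northcott's finiteness property; hence a failure of the theorem produces a sequence of pairwise distinct $S$-integral points $\gamma_n\in\Gamma$ with $\hhat(\gamma_n)\to\infty$.

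The first step is to localize the approximation. Note $\gamma_n\ne\alpha$, since $S$-integrality with respect to $\alpha$ forces $v$-adic separation from $\alpha$ at every place; then the product formula together with $h(\gamma_n-\alpha)=h(\gamma_n)+O(1)=\hhat(\gamma_n)+O(1)$ gives
\[
\sum_{v}\log^+|\gamma_n-\alpha|_v^{-1}\;=\;\hhat(\gamma_n)+O(1),
\]
the sum over all places $v$ of $K$ with the usual normalizations. By the definition of $S$-integrality the partial sum over $v\notin S$ is bounded in terms of $\alpha$ and $S$ only, so $\sum_{v\in S}\log^+|\gamma_n-\alpha|_v^{-1}\to\infty$, and since $S$ is finite one may pass to a subsequence along which a single place $v_0\in S$ satisfies
\[
-\log|\gamma_n-\alpha|_{v_0}\;\ge\;\tfrac{1}{|S|}\,\hhat(\gamma_n)-O(1)\;\longrightarrow\;\infty .
\]
Thus the $\gamma_n$ approximate the fixed element $\alpha$ at $v_0$ far better than linearly in their canonical height, while, since $\gamma_n\in\cO_S$, all its poles lie in $S$ and none of them at $v_0$, so the canonical height of $\gamma_n$ is carried by the remaining places of $S$.

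The crux is to show this cannot happen. Writing $\gamma_n=\Phi_{a_{n,1}}(\beta_1)+\cdots+\Phi_{a_{n,r}}(\beta_r)$ with $a_{n,i}\in A$, and noting that $\hhat(\gamma_n)$ is comparable to a fixed positive power of $\max_i|a_{n,i}|_\infty$, the degrees $\deg_t a_{n,i}$ tend to infinity. If $v_0$ is a finite place of $K$, one reduces $\Phi$ and the $\beta_i$ modulo $v_0$: the reductions $\bar\gamma_n$ lie in the finite $\bar\Phi$-submodule of the residue field generated by the $\bar\beta_i$, and an analysis modulo successive powers of a uniformizer at $v_0$ — a Weierstrass/Strassmann-preparation type argument exploiting that the relevant maps from $A^r$ into the finite quotients of $\cO_{v_0}$ have lattice-like image — shows that $-\log|\gamma_n-\alpha|_{v_0}$ can only be $O(\log\hhat(\gamma_n))$, unless $\alpha$ itself lies in $\Gamma$, in which case one translates by that element and reruns the same argument. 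If $v_0$ lies above $\infty$, one uses the $v_0$-adic analytic uniformization of $\Phi$: the point $\gamma_n$ corresponds to a class, modulo the period lattice $\Lambda$, of a Drinfeld logarithm $u_n=a_{n,1}u_1+\cdots+a_{n,r}u_r$ (with $u_i$ a logarithm of $\beta_i$), and good $v_0$-adic approximation of $\alpha$ forces $u_n-u_\alpha-\lambda_n$ — a nonzero linear form in Drinfeld logarithms, with $\lambda_n\in\Lambda$ and $u_\alpha$ a logarithm of $\alpha$ — to be $v_0$-adically exponentially small in the heights of its coefficients; a quantitative lower bound for linear forms in Drinfeld logarithms of Denis/Bosser type, applied at $v_0$, is only polynomially small, and this contradiction finishes the proof.

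I expect the main obstacle to be the last case: making the linear-forms-in-logarithms estimate available and effective at an arbitrary place above $\infty$ of the varying field $K$, compatibly with the contributions of all of the infinite places to the height. This is precisely the difficulty that the hypothesis of \cite{Siegel-Drinfeld} — a single place of $K$ over $\infty$ — was designed to avoid, and establishing such a place-uniform Diophantine approximation statement for finitely generated $\Phi$-submodules (where the finite generation of $\Gamma$ is what rules out the Frobenius-twisted families that otherwise obstruct Diophantine approximation in characteristic $p$) should be the technical heart of the paper; granting it, the inequality of the second step yields the contradiction.
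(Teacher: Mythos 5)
Your first two paragraphs correctly capture the reduction used here (which the paper inherits verbatim from \cite[Theorem~2.4]{Siegel-Drinfeld}): after replacing $\alpha$ by finitely many translates and enlarging $S$, one reduces to a quantitative statement about how $\gamma-\alpha$ behaves at a single place as $\gamma$ runs over $\Gamma$, and one writes $\gamma$ in a basis $\gamma_1,\dots,\gamma_r$ of the free part. That statement is exactly the role played by Proposition~\ref{prop:main} in the present paper (it replaces \cite[Proposition~3.12]{Siegel-Drinfeld}), and the theorem follows from it by the same argument as in \cite{Siegel-Drinfeld}.

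However, the heart of your proposal --- paragraphs three and four, where the approximation estimate is to be established --- is essentially the method of \cite{Siegel-Drinfeld} rather than of this paper, and it is precisely the step that does not go through. You propose to treat the places above $\infty$ by $v$-adic uniformization and a Bosser--Denis type lower bound for linear forms in Drinfeld logarithms; this is what \cite{Siegel-Drinfeld} does, and it is exactly why that paper needs the hypothesis that $K$ has a \emph{single} place above $\infty$. As \cite[Remark~3.14]{Siegel-Drinfeld} explains, this route does not extend to arbitrary $K$, and you yourself acknowledge in your closing paragraph that you cannot close this gap. So as written the proposal is incomplete precisely at the point the theorem is about.

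The paper's actual proof of Proposition~\ref{prop:main} is of a quite different nature and avoids analytic uniformization and transcendence estimates entirely. It is a purely height-theoretic descent, by induction on $r$: after normalizing so that $\Phi_t$ is monic, one may assume (else one is done by the inductive hypothesis) that the degrees $\deg P_{n,i}$ are all comparable along the sequence; one then divides each $P_{n,i}$ by the one of largest degree, getting bounded quotients $C_i$, and replaces the basis by $\delta_i$'s that are $\Phi$-linear combinations of the $\gamma_i$'s. Lemma~\ref{lem:local canonical leading term} (a ``leading-term'' estimate for $\left|\sum_i \Phi_{P_i}(\gamma_i)\right|_v$ in terms of local canonical heights) shows that this replacement either produces the desired place $v$ directly, or forces $\hhat_v(\delta_j)\le q^{-d}\max_i\hhat_v(\gamma_i)$ for all $v$. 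Iterating, one either succeeds or builds nonzero elements of $\Gamma$ whose canonical height decays like $q^{-\ell d}$, contradicting the uniform Lehmer-type lower bound of Lemma~\ref{lem:lower bound canonical} (from \cite{mw2}). That is the new idea that removes the restriction on $K$, and it is absent from your proposal. Your sketch of the finite-place case (``Strassmann-type'' analysis modulo powers of a uniformizer) is also vague and would need to produce a bound of the same strength as Proposition~\ref{prop:main}; in the paper, the finite and infinite places are handled on the same footing via the local canonical heights, so no separate $p$-adic argument is required.
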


As an aside, we note that for any finitely generated $\Phi$-module $\Gamma\subset \Kbar$, we can always find a finite extension of $K$ containing $\Gamma$ and also containing the point $\alpha$; this explains why we state our Theorem~\ref{Siegel} assuming $\Gamma$, $\alpha$ and also the coefficients of $\Phi_t$ are contained in $K$ (rather than contained in $\Kbar$, as \cite[Conjecture~5.5]{findrin} was previously stated). 
We refer the reader to Section~\ref{notation} for more details on Drinfeld modules, including the notion of $S$-integral points with respect to a given point.

The strategy of our proof is identical with the strategy employed in \cite{Siegel-Drinfeld}; actually, with one exception, the statements proven in \cite{Siegel-Drinfeld} are valid in the generality of our Theorem~\ref{Siegel}. However, the main result from the aforementioned paper, i.e., \cite[Proposition~3.12]{Siegel-Drinfeld} is proven under the technical assumption that there exists a single place in the function field $K$ lying above the place at infinity from $\Fq(t)$. Moreover, the strategy of proof from \cite[Proposition~3.12]{Siegel-Drinfeld} does not extend to arbitrary function fields $K$, as explained in \cite[Remark~3.14]{Siegel-Drinfeld}. So, the main result of our current paper is to develop an alternative strategy for proving \cite[Proposition~3.12]{Siegel-Drinfeld} for arbitrary function fields $K$; the statement generalizing \cite[Proposition~3.12]{Siegel-Drinfeld} is proven in our Proposition~\ref{prop:main}. Finally, we note that in Theorem~\ref{Siegel}, if $\Phi$ were a Drinfeld module of special characteristic, then its conclusion could fail (as can be easily seen in the case the Drinfeld module is simply given by $\Phi_t(x):=x^q$; then we may take $K$ be a finite extension of the rational function field $\Fq(\theta)$ and so, there are infinitely many points in the cyclic $\Phi$-module generated by $\theta+1$, which are $S$-integral with respect to $1$, where $S$ consists of the places $v$ of $K$ for which $\theta$ is not a $v$-adic unit). 

Our Theorem~\ref{Siegel} completes the proof of Siegel's theorem in the context of Drinfeld modules. Over the past 30 years, there was a significant increase in the study of the arithmetic of Drinfeld modules which established the validity of several classical theorems from the arithmetic geometry of abelian varieties in the context of Drinfeld modules. Indeed, Scanlon \cite{Scanlon} proved the Manin-Mumford type theorem for Drinfeld modules, conjectured by Denis \cite{Denis-conjectures}. The second author proved in \cite{Mat.Ann} an equidistribution statement for torsion points of Drinfeld modules, which may be interpreted as a weaker variant of the classical equidistribution theorem of Szpiro-Ullmo-Zhang \cite{suz} for torsion points of abelian varieties. Breuer \cite{Breuer} proved an Andr\'e-Oort type theorem for Drinfeld modules in the spirit of the classical results of Edixhoven-Yafaev \cite{Edixhoven} for Shimura varieties. The second author (both in a single author paper \cite{IMRN} and also in a joint paper with Tucker \cite{dynml}) proved various instances of a Mordell-Lang type statement for Drinfeld modules conjectured by Denis \cite{Denis-conjectures}. Actually, the paper \cite{dynml} constituted the starting point of the Dynamical Mordell-Lang Conjecture (formulated in \cite{GT-DML}), which by itself generated extensive research in the past 15 years (for a comprehensive discussion of the Dynamical Mordell-Lang Conjecture, see \cite{DML-book}, especially \cite[Chapter~12]{DML-book} which details the connection between the aforementioned conjecture and Denis' conjecture for Drinfeld modules from \cite{Denis-conjectures}).

The plan of our paper is as follows: in Section~\ref{notation} we give the basic definitions and notation, and
then, in Section~\ref{proof} we prove the
main result, which is Proposition~\ref{prop:main} (note that  Theorem~\ref{Siegel} is an immediate consequence of our Proposition~\ref{prop:main} using the exact same strategy of proof as for \cite[Theorem~2.4]{Siegel-Drinfeld}, the only difference being the replacement of \cite[Proposition~3.12]{Siegel-Drinfeld} by our new Proposition~\ref{prop:main}).

{\bf Acknowledgments.} We thank the referee for their many useful suggestions.


\section{Notation}
\label{notation}

Our notational section has a significant overlap with \cite[Section~2]{Siegel-Drinfeld}.


\subsection{Drinfeld modules}
We begin by defining a Drinfeld module.  Let $p$ be a prime and let
$q$ be a power of $p$. Let $A:=\mathbb{F}_q[t]$, let $K$ be a finite field extension of $\mathbb{F}_q(t)$, and let $\Kbar$ be an
algebraic closure of $K$. We let $\tau$ be the Frobenius on
$\mathbb{F}_q$, and we extend its action on $\Kbar$.  Let $K\{\tau\}$
be the ring of polynomials in $\tau$ with coefficients from $K$ (the
addition is the usual addition, while the multiplication is the
composition of functions).

A Drinfeld module is a morphism $\Drin:A\rightarrow K\{\tau\}$ for
which the coefficient of $\tau^0$ in $\Drin(a)=:\Drin_a$ is $a$ for
every $a\in A$, and there exists $a\in A$ such that $\Drin_a\ne
a\tau^0$. The definition given here represents what Goss \cite{Goss}
calls a Drinfeld module of ``generic characteristic''.

We note that usually, in the definition of a Drinfeld module, $A$ is
the ring of functions defined on a projective nonsingular curve $C$,
regular away from a closed point $\eta\in C$. For our definition of a
Drinfeld module, $C=\mathbb{P}^1_{\mathbb{F}_q}$ and $\eta$ is the
usual point at infinity on $\mathbb{P}^1$. On the other hand, every
ring of regular functions $A$ as above contains $\mathbb{F}_q[t]$ as a
subring, where $t$ is a nonconstant function in $A$. Furthermore, even for such a general ring of regular functions $A$, we have that $A$ is a finitely generated module over the ring $\Fq[t]$, which means that the statement of our Theorem~\ref{Siegel} is left unchanged.
 
For every field extension $K\subset L$, the Drinfeld module $\Drin$
induces an action on $\mathbb{G}_a(L)$ by $a*x:=\Drin_a(x)$, for each
$a\in A$. We call \emph{$\Phi$-submodules} subgroups of
$\mathbb{G}_a(\overline{K})$ which are invariant under the action of
$\Phi$. We define the \emph{rank} of a $\Phi$-submodule $\Gamma$ to  be 
$$\dim_{\Frac(A)}\Gamma\tensor_A\Frac(A).$$

A point $\alpha\in\Kbar$ is \emph{torsion} for the Drinfeld module action if
and only if there exists $Q\in A\setminus\{0\}$ such that
$\Drin_Q(\alpha)=0$. It is immediate to see that the set of all torsion points is also a $\Phi$-submodule. Since each polynomial $\Phi_Q$ is separable, the torsion submodule
$\Phi_{\tor}$ lies in the separable closure $K^{\sep}$ of $K$.

\subsection{The coefficients of the Drinfeld module}
Each Drinfeld module is isomorphic over $\Kbar$ to a Drinfeld module for which the leading
 coefficient of $\Drin_t$ equals $1$; in particular, for each $a\in\Fq[t]\setminus \Fq$, we would then have that $\Phi_a$ is a polynomial whose leading coefficient lives in $\Fq$. This is a standard observation used previously in \cite{mw2, locleh}.

\subsection{Valuations and Weil heights}

We let $M_K$ be the set of valuations on $K$. Then $M_K$ is a set of valuations which satisfies a
product formula (see \cite[Chapter 2]{Serre-Mordell_Weil}); thus (with an appropriate normalization for each absolute value $|\cdot |_v$) we have 
\begin{itemize}
\item for each nonzero $x\in K$, there are finitely many $v\in M_K$
such that $|x|_v\ne 1$; and \\
\item for each nonzero $x\in K$, we have $\prod_{v\in M_K} |x|_v=1$.
\end{itemize}
We may use these valuations to define a
Weil height for each $x \in K$ as
\begin{equation}\label{weil}
h(x) = \sum_{v \in M_K} \log \max (|x|_v,1).
\end{equation}

%
%

\subsection{Canonical heights}
Let $\Drin:A\rightarrow K\{\tau\}$ be a Drinfeld module of \emph{rank}
$d$ (i.e. the degree of $\Phi_t$ as a polynomial in $\tau$ equals
$d$). The canonical height of $\beta\in K$ relative to $\Drin$ (see \cite{Denis}) is
defined as
$$\hhat(\beta) = \lim_{n \to \infty}
\frac{h(\Drin_{t^n}(\beta))}{q^{nd}}.$$ 
Denis \cite{Denis} showed that a point is torsion if and only if its canonical height equals $0$. 

For every $v\in M_K$, we let the local canonical height of $\beta\in K$ at $v$ to  be
\begin{equation}\label{poon-def}
\hhat_v(\beta) = \lim_{n \to \infty} \frac{\log
  \max(|\Drin_{t^n}(\beta)|_v, 1)}{q^{nd}};
\end{equation}
for more details on local canonical heights, see \cite{locleh}. 
Furthermore, for every $a\in \Fq[t]$, we have $\hhat_v(\Phi_a(x))=\deg(\Phi_a)\cdot\hhat_v(x)$. 
It is clear that $\hhat_v$ satisfies the triangle inequality, and also that $\sum_{v\in M_K} \hhat_v(\beta) = \hhat(\beta)$ (therefore, also $\hhat(\cdot)$ satisfies the triangle inequality).

\subsection{Integrality and reduction}
Since we can always replace $K$ by a finite extension, we will define the notion of $S$-integrality with respect to a given point $\alpha$ under the assumption that $\alpha\in K$ (note that we are interested in Theorem~\ref{Siegel} for the $S$-integrality with respect to $\alpha$ within a given finitely generated $\Phi$-module $\Gamma$ and therefore, once again at the expense of replacing $K$ by a finite extension, we may assume $\Gamma\subset K$). 
\begin{definition}
\label{S-integral definition}
For a finite set of places $S \subset M_K$ and $\alpha\in K$, we say
that $\beta \in K$ is $S$-integral with respect to $\alpha$ if for
every place $v\notin S$, the following
are true:
\begin{itemize}
\item if $|\alpha|_v\le 1$, then $|\alpha-\beta|_v\ge 1$.
\item if $|\alpha|_v >1$, then $|\beta|_v\le 1$.
\end{itemize}
\end{definition}
We note that if $\beta$ is $S$-integral with respect to $\alpha$, then it is also $S'$-integral with respect to $\alpha$, where $S'$ is a finite set of places containing $S$. Furthermore (as noted in \cite[Subsection~2.6]{Siegel-Drinfeld}), the notion of $S$-integrality with respect to a point is invariant when replacing $K$ by a finite extension $L$ and also replacing $S$ by the set of places in $L$ lying above the places from $S$.

\begin{definition}
  The Drinfeld module $\Phi$ has good reduction at a place $v$ if for
  each nonzero $a\in A$, all coefficients of $\Phi_a$ are $v$-adic
  integers and the leading coefficient of $\Phi_a$ is a $v$-adic unit.
  If $\Phi$ does not have good reduction at $v$, then we say that
  $\Phi$ has bad reduction at $v$.
\end{definition}

It is immediate to see that $\Phi$ has good reduction at $v$ if and only if all coefficients of $\Phi_t$ are $v$-adic integers, since we already assumed that the leading coefficient of $\Phi_t$ equals $1$. Finally, we note that replacing $\Phi$ by the isomorphic Drinfeld module $\Psi$ given by $\Psi_t(x)=c\Phi_t\left(c^{-1}x\right)$ (for some nonzero $c\in K$), if we were to show that there exist finitely many $S$-integral points in the finitely generated $\Phi$-module $\Gamma$ with respect to a given point $\alpha$, then it suffices to prove that there exist finitely many points in the $\Psi$-module $c\cdot \Gamma$, which are $S'$-integral with respect to $c\alpha$, where $S'$ consists of all places in $S$ along with the finitely many places $v$ of $K$ for which $|c|_v\ne 1$. Indeed, for each element $c\gamma\in c\Gamma$ which is $S'$-integral with respect to $c\alpha$, we must have that $\gamma$ is $S'$-integral with respect to $\alpha$. So, knowing there exist at most finitely many such elements $\gamma$ which are $S'$-integral for $\alpha$ yields that there exist at most finitely many points in $\Gamma$, which are $S$-integral with respect to $\alpha$ (because $S\subseteq S'$ and enlarging the set $S$ may only increase the number of $S$-integral points with respect to a given point).


\section{Proofs of our main results}
\label{proof}

Before proceeding to the proof of Theorem~\ref{Siegel}, we prove several facts about local heights. 

%
%

From now on, let $\Phi_t=\sum_{i=0}^d a_i\tau^i$; also, as explained in Section~\ref{notation}, we assume from now on that $a_d=1$.  The following result follows using a similar proof as the one employed in  \cite[Fact~3.6]{Siegel-Drinfeld} (in the case $\Phi_t$ is monic).

\begin{lemma}
\label{jumps}
For every place $v$ of $K$, there exists a real number $M_v\ge 1$ such that for each $x\in K$, if $|x|_v> M_v$, then for every nonzero $Q\in \Fq[t]$, we have $|\Phi_{Q}(x)|_v=|x|_v^{q^{d\cdot\deg(Q)}}$. Moreover, if $|x|_v> M_v$, then $\hhat_v(x)=\log |x|_v$.
\end{lemma}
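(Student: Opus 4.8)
The key idea is that the place $v$ sees $\Phi_t(x) = \sum_{i=0}^{d} a_i \tau^i(x)$ as a polynomial in $x$ whose top-degree term $\tau^d(x) = x^{q^d}$ dominates all the others once $|x|_v$ is large enough. Concretely, since $a_d = 1$, I would set $C_v := \max_{0 \le i \le d-1} |a_i|_v$ and then choose $M_v \ge 1$ large enough that $|x|_v > M_v$ forces $|x|_v^{q^d} > C_v \cdot |x|_v^{q^{d-1}}$; this is possible because $q^d > q^{d-1}$, so e.g. any $M_v$ with $M_v^{q^d - q^{d-1}} > C_v$ works (and we may also insist $M_v \ge 1$). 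For such $x$, the ultrametric (or, at archimedean-like places, the ordinary triangle) inequality applied to $\Phi_t(x)$ gives $|\Phi_t(x)|_v = |x^{q^d}|_v = |x|_v^{q^d}$, the lower-order terms being strictly dominated. In particular $|\Phi_t(x)|_v > |x|_v > M_v$ as well, so the estimate is self-propagating under iteration by $\Phi_t$.

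From there I would first treat $Q = t^n$ by induction on $n$: the base case $n=1$ is the computation just done, and if $|\Phi_{t^{n-1}}(x)|_v = |x|_v^{q^{d(n-1)}} > M_v$ then applying the same one-step estimate to the point $\Phi_{t^{n-1}}(x)$ yields $|\Phi_{t^n}(x)|_v = |\Phi_{t^{n-1}}(x)|_v^{q^d} = |x|_v^{q^{dn}}$. This immediately gives the "moreover" clause: for $|x|_v > M_v$,
\[
\hhat_v(x) = \lim_{n\to\infty} \frac{\log\max(|\Phi_{t^n}(x)|_v, 1)}{q^{nd}} = \lim_{n\to\infty} \frac{q^{nd}\log|x|_v}{q^{nd}} = \log|x|_v.
\]
For a general nonzero $Q \in \Fq[t]$ of degree $m$, write $\Phi_Q$ as a polynomial in $\tau$ of degree $dm$; as noted in Section~\ref{notation} (after normalizing so that $a_d = 1$), the leading coefficient of $\Phi_Q$ lies in $\Fq$, hence is a $v$-adic unit. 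So the same dominance argument applies verbatim to $\Phi_Q$ in place of $\Phi_t$: enlarging $M_v$ if necessary (taking the max of the thresholds coming from $\Phi_t$ and from the finitely many relevant coefficient-bounds — though in fact a single $M_v$ works once one observes $\Phi_Q(x)$ is a polynomial in $\Phi_{t^m}(x), \dots$), we get $|\Phi_Q(x)|_v = |x|_v^{q^{dm}}$ whenever $|x|_v > M_v$. Alternatively, and more cleanly, one can avoid handling all $Q$ directly: since $\Fq[t]$ is generated as an algebra by $t$ and $\Phi$ is a ring homomorphism, every $\Phi_Q$ is an $\Fq$-linear combination of compositions of $\Phi_t$ with itself, so the claim for all $Q$ follows formally from the claim for $t^n$ together with the fact that $\Fq$-scalars act by $v$-adic units.

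The only real subtlety — and the step I would be most careful about — is making the single constant $M_v$ work uniformly for \emph{all} nonzero $Q$ simultaneously, rather than a $Q$-dependent threshold. The resolution is exactly the self-propagating nature of the estimate: once $|x|_v > M_v$ with $M_v$ chosen for the single polynomial $\Phi_t$, every iterate $\Phi_{t^n}(x)$ again has $v$-adic absolute value exceeding $M_v$, and a general $\Phi_Q$ of degree $m$ factors (after the normalization) through these iterates with a unit leading coefficient, so no further enlargement of $M_v$ is needed. This is the same mechanism as in \cite[Fact~3.6]{Siegel-Drinfeld}, the only new point being the bookkeeping that the leading coefficient of $\Phi_Q$ — not just of $\Phi_t$ — is a $v$-adic unit, which is guaranteed precisely by the normalization $a_d = 1$ recorded in Section~\ref{notation}.
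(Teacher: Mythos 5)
Your proposal is correct and follows essentially the same route as the paper: choose $M_v$ so the top term $x^{q^d}$ of $\Phi_t(x)$ strictly dominates the lower-order terms, observe the estimate is self-propagating so $|\Phi_{t^n}(x)|_v = |x|_v^{q^{dn}}$ by induction, and then extend to general nonzero $Q$ by writing $\Phi_Q$ as an $\Fq$-linear combination of the $\Phi_{t^j}$ and invoking the ultrametric inequality with the $\Fq$-coefficients being $v$-adic units. The paper simply sets $M_v=\max\left\{1,\max_{0\le i\le d-1}|a_i|_v^{1/(q^d-q^i)}\right\}$ and records the same chain of conclusions; your ``cleaner'' second route to general $Q$ is exactly the paper's implicit argument, and you are right to prefer it over the first route you sketch (directly bounding all coefficients of $\Phi_Q$), since that would a priori give a $Q$-dependent threshold. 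One small remark: there are no archimedean places here, since $K$ is a function field over $\Fq$, so the parenthetical about ``archimedean-like places'' is unnecessary.
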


\begin{proof}
We let $M_v=\max\left\{1,\max_{i=0}^{d-1}|a_i|_v^{1/(q^d-q^i)}\right\}$. Then for each $x\in K$ such that $|x|_v>M_v$, we have that $|\Phi_t(x)|_v=|x|_v^{q^d}$; so, in particular, $|\Phi_t(x)|_v>M_v$ and therefore, for each nonnegative integer $n$, we have that $\left|\Phi_{t^n}(x)\right|_v=|x|_v^{q^{dn}}$. In particular, for any nonzero polynomial $Q\in A$, we have $|\Phi_Q(x)|_v=|x|_v^{q^{d\deg(Q)}}$. Finally, this means that if $|x|_v>M_v$, then $\hhat_v(x)=\log|x|_v$. 
\end{proof}

\begin{lemma}
\label{lem:local canonical}
For each place $v$ of $M_K$ and for each $x\in K$, if $\hhat_v(x)>0$, then for all polynomials $Q\in\Fq[t]$ of sufficiently large degree, we have that $\hhat_v(x)=\frac{\log\left|\Phi_Q(x)\right|_v}{q^{d\cdot \deg(Q)}}$. 
\end{lemma}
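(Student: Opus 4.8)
The plan is to first bound $\hhat_v$ from above on the ``bounded disc'' $\{z\in K:|z|_v\le M_v\}$ (where $M_v\ge 1$ is the constant produced in Lemma~\ref{jumps}), and then to use the scaling identity $\hhat_v(\Phi_Q(x))=\deg(\Phi_Q)\cdot\hhat_v(x)=q^{d\deg Q}\hhat_v(x)$ to force $\Phi_Q(x)$ outside that disc once $\deg Q$ is large enough, at which point Lemma~\ref{jumps} computes $\hhat_v(\Phi_Q(x))$ exactly.

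First I would show: if $|z|_v\le M_v$ then $\hhat_v(z)\le\log M_v$. The key sublemma is the inductive estimate $|\Phi_{t^n}(z)|_v\le M_v^{q^{dn}}$ for all $n\ge 0$. The base case $n=0$ is the hypothesis. For the step, set $w:=\Phi_{t^n}(z)$ and use $\Phi_{t^{n+1}}(z)=\Phi_t(w)=\sum_{i=0}^d a_iw^{q^i}$ with $a_d=1$; since every place of the function field $K$ is non-archimedean, $|\Phi_t(w)|_v\le\max_{0\le i\le d}|a_i|_v|w|_v^{q^i}$. The $i=d$ term is $|w|_v^{q^d}\le M_v^{q^{d(n+1)}}$, while for $i<d$ the defining inequality $|a_i|_v\le M_v^{q^d-q^i}$ (which is exactly how $M_v$ is chosen in the proof of Lemma~\ref{jumps}), together with $M_v\ge 1$ and $n\ge 0$, gives $|a_i|_v|w|_v^{q^i}\le M_v^{(q^d-q^i)+q^{dn+i}}\le M_v^{q^{d(n+1)}}$. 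Feeding this into the definition~\eqref{poon-def} of $\hhat_v$ and letting $n\to\infty$ yields $\hhat_v(z)\le\log M_v$. Contrapositively, $\hhat_v(z)>\log M_v$ forces $|z|_v>M_v$, and then Lemma~\ref{jumps} gives $\hhat_v(z)=\log|z|_v$.

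To finish, note that since $\hhat_v(x)>0$, the quantity $\hhat_v(\Phi_Q(x))=q^{d\deg Q}\hhat_v(x)$ tends to $\infty$ as $\deg Q\to\infty$; so there is $N$ with $\hhat_v(\Phi_Q(x))>\log M_v$ whenever $\deg Q\ge N$. For such $Q$ the previous paragraph gives $|\Phi_Q(x)|_v>M_v$, hence $\hhat_v(\Phi_Q(x))=\log|\Phi_Q(x)|_v$ by Lemma~\ref{jumps}; comparing this with $\hhat_v(\Phi_Q(x))=q^{d\deg Q}\hhat_v(x)$ and dividing by $q^{d\deg Q}$ gives the asserted formula $\hhat_v(x)=\frac{\log|\Phi_Q(x)|_v}{q^{d\deg Q}}$. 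The only step with real content is the bound on $\{|z|_v\le M_v\}$, and within it the one thing to watch is the exponent inequality $(q^d-q^i)+q^{dn+i}\le q^{d(n+1)}$ for $0\le i<d$, $n\ge 0$, which holds because $q^{dn}\ge 1$ and $q^d-q^i>0$.
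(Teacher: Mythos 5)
Your argument is correct, and it takes a genuinely different route from the paper's. You establish the quantitative bound $\hhat_v(z)\le\log M_v$ on the disc $\{|z|_v\le M_v\}$ via the inductive estimate $|\Phi_{t^n}(z)|_v\le M_v^{q^{dn}}$, and then apply Lemma~\ref{jumps} directly to $\Phi_Q(x)$ once $\hhat_v(\Phi_Q(x))=q^{d\deg Q}\hhat_v(x)$ exceeds $\log M_v$. The paper instead uses only the qualitative observation that $\hhat_v(x)>0$ forces some iterate $\Phi_{t^\ell}(x)$ to leave that disc (for otherwise $\hhat_v(x)\le\lim_n(\log M_v)/q^{dn}=0$), takes $\ell$ minimal with this property, applies Lemma~\ref{jumps} to $\Phi_{t^\ell}(x)$, and then exploits the $\bF_q$-linear decomposition $\Phi_Q=\sum_j c_j\Phi_{t^j}$ together with minimality of $\ell$ to deduce $|\Phi_Q(x)|_v=|\Phi_{t^\ell}(x)|_v^{q^{d(\deg Q-\ell)}}$ for $\deg Q\ge\ell$. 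Your route trades the paper's minimality-plus-decomposition step for the exponent bookkeeping in the inductive estimate; it has the modest advantage of invoking Lemma~\ref{jumps} only once and never needing the structure of $\Phi_Q$ as a combination of the $\Phi_{t^j}$, while the paper's version avoids the explicit exponent inequality $(q^d-q^i)+q^{dn+i}\le q^{d(n+1)}$ by reusing the escape mechanism already encoded in Lemma~\ref{jumps}. Both are short, and both rest on the same two ingredients (the scaling relation for $\hhat_v$ and Lemma~\ref{jumps}).
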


\begin{proof}
The proof is an immediate corollary of Lemma~\ref{jumps} once we note that if $\hhat_v(x)>0$ then there exists a nonzero integer $\ell$ (depending on $x$ and $v$) such that $|\Phi_{t^\ell}(x)|_v>M_v$; moreover, we may assume $\ell$ is minimal with this property. Then Lemma~\ref{jumps} yields that 
\begin{equation}
\label{eq:1st large iterate}
\hhat_v(x)=\frac{\hhat_v\left(\Phi_{t^\ell}(x)\right)}{q^{d\ell}}= \frac{\log|\Phi_{t^\ell}(x)|_v}{q^{d\ell}}.
\end{equation}
Moreover, for each polynomial $Q\in\Fq[t]$ of degree at least equal to $\ell$, we have that 
\begin{equation}
\label{eq:2nd large iterate}
\left|\Phi_Q(x)\right|_v=\left|\Phi_{t^\ell}(x)\right|_v^{q^{d(\deg(Q)-\ell)}}. 
\end{equation}
Equations \eqref{eq:1st large iterate} and \eqref{eq:2nd large iterate} finish the proof of Lemma~\ref{lem:local canonical}.
\end{proof}

The following result is an immediate consequence of \cite[Theorem~4.5]{mw2} (which provides a more general positive lower bound for the canonical height of non-torsion points $x\in\Kbar$ depending only on the number of places of bad reduction for the given Drinfeld module in the field extension $K(x)$).

\begin{lemma}
\label{lem:lower bound canonical}
There exists a positive constant $c_0$ such that for all non-torsion points $x\in K$, we have $\hhat(x)\ge c_0$.
\end{lemma}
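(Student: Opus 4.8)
\emph{Sketch of the approach.} The shortest route is simply to quote \cite[Theorem~4.5]{mw2}, which gives a positive lower bound for $\hhat$ on non-torsion points of $\Kbar$ depending only on the number of places of bad reduction of $\Phi$ in the field of definition: since $\Phi$ is defined over $K$ it has only finitely many places of bad reduction in $K$, so for $x\in K$ that number is bounded by a fixed constant, and the desired uniform $c_0>0$ follows at once. Below I describe the self-contained argument I would write if that reference were not at hand.

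First I would record the standard comparison $|\hhat(x)-h(x)|\le C$ valid for all $x\in K$, where $C=C(\Phi)\ge 0$. This comes from telescoping the defining limit $\hhat(x)=\lim_{n}h(\Phi_{t^n}(x))/q^{dn}$: because $\Phi_t$ is a polynomial of degree $q^d$, the quantity $|h(\Phi_t(y))-q^d h(y)|$ is bounded uniformly in $y$, and summing the resulting geometric series bounds $|\hhat(x)-h(x)|$.

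Next I would invoke the Northcott property of the function field $K$: since $K$ is a finite extension of $\Fq(t)$, its field of constants is finite, and hence $\{x\in K:\ h(x)\le B\}$ is finite for every real $B$ (by Riemann--Roch: an element of bounded height has a pole divisor of bounded degree, there are only finitely many effective divisors of bounded degree over a finite field, and each Riemann--Roch space is a finite-dimensional vector space over a finite field, hence a finite set). Combined with the height comparison, the set $\Sigma:=\{x\in K:\ \hhat(x)\le 1\}\subseteq\{x\in K:\ h(x)\le 1+C\}$ is finite.

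Finally I would use Denis' theorem \cite{Denis}, recalled in Section~\ref{notation}, that $\hhat(x)=0$ if and only if $x$ is torsion. Hence every non-torsion point of the finite set $\Sigma$ has strictly positive canonical height, and I may set
$$c_0:=\min\left(1,\ \min\{\hhat(x):\ x\in\Sigma\ \text{is non-torsion}\}\right)>0,$$
with the convention that the inner minimum equals $1$ when no such $x$ exists. For any non-torsion $x\in K$ we then get $\hhat(x)\ge c_0$: either $\hhat(x)>1\ge c_0$, or else $x\in\Sigma$ and $\hhat(x)\ge c_0$ by construction. The step requiring the most thought is the Northcott input — the sole place where finiteness of the constant field of $K$ enters — while everything else is formal; citing \cite[Theorem~4.5]{mw2} bypasses this entirely and even yields an explicit value of $c_0$ in terms of the number of bad places.
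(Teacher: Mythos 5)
Your primary approach---citing \cite[Theorem~4.5]{mw2}---is exactly how the paper proves this lemma; the paper gives no argument beyond observing that for $x\in K$ the field $K(x)=K$ is fixed, so the number of bad-reduction places is bounded and the cited theorem's lower bound is uniform. Your alternative self-contained sketch (the uniform comparison $|\hhat(x)-h(x)|\le C$ obtained by telescoping, Northcott for function fields with finite constant field, and Denis' characterization of torsion as the vanishing of $\hhat$) is also correct and is a perfectly good elementary route; it produces a less explicit constant and is inherently tied to the fixed field $K$, whereas the cited theorem gives a bound valid for all non-torsion $x\in\Kbar$ in terms of the number of bad places over $K(x)$. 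Either argument suffices for the lemma as stated.
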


The next result will be used in the proof of Proposition~\ref{prop:main} (which is our main technical ingredient for the proof of Theorem~\ref{Siegel}).

\begin{lemma}
\label{lem:local canonical leading term}
Let $r\in\bN$, let $\gamma_1,\dots, \gamma_r\in K$ and let $v\in M_K$. Assume that 
\begin{equation}
\label{eq:hypothesis local canonical 1}
\hhat_v(\gamma_1)>\frac{\max_{i>1}\hhat_v(\gamma_i)}{q^d}.
\end{equation}
Then there exists $n_0\in\bN$ (depending only on $\Phi$, $v$ and on $\gamma_1,\dots, \gamma_r$) and there exists a positive real number $c_1$ (depending only on $\Phi$ and $v$)  such that for all polynomials $P_1,\dots, P_r\in\Fq[t]$ satisfying
\begin{enumerate}
\item[(i)] $\deg(P_1)>\max\{\deg(P_2),\cdots,\deg(P_r)\}$, and
\item[(ii)] $\min\{\deg(P_2),\cdots,\deg(P_r)\}\ge n_0$,
\end{enumerate}
we have $\log\left|\Phi_{P_1}(\gamma_1)+\cdots +\Phi_{P_r}(\gamma_r)\right|_v> c_1\cdot q^{d\cdot (\deg(P_1)-n_0)}$.
\end{lemma}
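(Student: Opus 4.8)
The plan is to reduce the estimate to the behavior of the single dominant term $\Phi_{P_1}(\gamma_1)$ and to control the remaining terms from above. First I would use Lemma~\ref{lem:local canonical} applied to $\gamma_1$ (whose local canonical height is positive by \eqref{eq:hypothesis local canonical 1}): there is an $\ell_1$ such that for all $Q$ of degree $\ge\ell_1$ we have $\log|\Phi_Q(\gamma_1)|_v=q^{d\deg(Q)}\hhat_v(\gamma_1)$; in fact, as in the proof of that lemma, once $|\Phi_{t^{\ell_1}}(\gamma_1)|_v>M_v$ the value $|\Phi_Q(\gamma_1)|_v$ is genuinely large (bigger than $M_v$) and equals $|x|_v$-type growth. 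So for $\deg(P_1)$ large, $\log|\Phi_{P_1}(\gamma_1)|_v = q^{d\deg(P_1)}\hhat_v(\gamma_1)$.

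Next I would bound the tail $\sum_{i>1}\Phi_{P_i}(\gamma_i)$. For each $i>1$, either $\hhat_v(\gamma_i)>0$, in which case Lemma~\ref{lem:local canonical} (and the fact that outside the ``large'' regime the local height is bounded, cf. Lemma~\ref{jumps}) gives $\log|\Phi_{P_i}(\gamma_i)|_v \le q^{d\deg(P_i)}\hhat_v(\gamma_i) + O(1)$ for all $\deg(P_i)\ge$ some threshold; or $\hhat_v(\gamma_i)=0$, in which case $|\Phi_{P_i}(\gamma_i)|_v \le M_v$ is bounded independently of $P_i$ (since if it ever exceeded $M_v$ it would force positive local height). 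By the triangle inequality for $|\cdot|_v$ and condition (i), $\log\bigl|\sum_{i>1}\Phi_{P_i}(\gamma_i)\bigr|_v \le \max_{i>1} q^{d\deg(P_i)}\hhat_v(\gamma_i) + O(1) \le q^{d(\deg(P_1)-1)}\max_{i>1}\hhat_v(\gamma_i) + O(1)$, where the $O(1)$ is absorbed into constants depending only on $\Phi$ and $v$ (and on the finitely many $\gamma_i$ for the threshold $n_0$).

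Now I would compare the two bounds. Write $D:=\deg(P_1)$. We have $\log|\Phi_{P_1}(\gamma_1)|_v = q^{dD}\hhat_v(\gamma_1)$ and $\log\bigl|\sum_{i>1}\Phi_{P_i}(\gamma_i)\bigr|_v \le q^{d(D-1)}\max_{i>1}\hhat_v(\gamma_i) + C$. By hypothesis \eqref{eq:hypothesis local canonical 1}, $\hhat_v(\gamma_1) - q^{-d}\max_{i>1}\hhat_v(\gamma_i) =: \delta > 0$, so the dominant term beats the tail by at least $q^{dD}\delta - C$, which is positive (and large) once $D$ is large. Since the two summands have different $v$-adic sizes, the ultrametric (or archimedean triangle inequality together with the sharp lower bound $|a+b|_v\ge |a|_v-|b|_v$) gives $\log|\Phi_{P_1}(\gamma_1)+\cdots+\Phi_{P_r}(\gamma_r)|_v \ge q^{dD}\hhat_v(\gamma_1) - q^{d(D-1)}\max_{i>1}\hhat_v(\gamma_i) - C'$. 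This is bounded below by $\tfrac12 q^{dD}\delta$ for $D$ large, hence (after choosing $n_0$ large enough and a suitable $c_1$ depending only on $\Phi$ and $v$, using also Lemma~\ref{lem:lower bound canonical} to keep $\delta$ and $\hhat_v(\gamma_1)$ controlled, e.g.\ $\hhat_v(\gamma_1)>0$ forces $\hhat_v(\gamma_1)\ge$ a uniform positive amount only after summing, but at the single place $v$ we just use $\delta>0$) we get $\log|\cdots|_v > c_1 q^{d(D-n_0)}$, absorbing the constant $C'$ and the $q^{dn_0}$ factor into the definition of $c_1$ and $n_0$.

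The main obstacle will be bookkeeping the dependence of constants: ensuring that $c_1$ depends only on $\Phi$ and $v$ (it comes from $M_v$ and the $q^d$-gap, which are intrinsic) while $n_0$ is allowed to depend on the $\gamma_i$ (it comes from the thresholds $\ell_i$ after which Lemma~\ref{lem:local canonical} applies to each $\gamma_i$, and from the requirement that $q^{dD}\delta$ dominates the additive constant). A delicate point is the case where some $\gamma_i$ with $i>1$ has $\hhat_v(\gamma_i)=0$ but $|\Phi_{P_i}(\gamma_i)|_v$ is still close to $M_v$: one must note this contributes only a bounded (in $D$) amount, so it is harmless. Beyond that, the argument is a routine "dominant term wins" estimate built entirely on Lemmas~\ref{jumps} and \ref{lem:local canonical}.
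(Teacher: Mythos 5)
Your overall strategy is the same as the paper's: isolate the dominant term $\Phi_{P_1}(\gamma_1)$, use hypothesis \eqref{eq:hypothesis local canonical 1} together with $\deg P_1 > \max_{i>1}\deg P_i$ and Lemmas~\ref{jumps} and \ref{lem:local canonical} to show the tail $\sum_{i>1}\Phi_{P_i}(\gamma_i)$ has strictly smaller $v$-adic size, and then invoke the (ultrametric) triangle inequality. Two remarks on the execution. First, all places of $K$ are non-archimedean, so once $|\Phi_{P_1}(\gamma_1)|_v$ strictly exceeds the norm of the tail you get the \emph{equality} $\bigl|\sum_{i=1}^r\Phi_{P_i}(\gamma_i)\bigr|_v=|\Phi_{P_1}(\gamma_1)|_v$; the detour through ``$|a+b|_v\ge|a|_v-|b|_v$'' and the subtraction at the level of logarithms is not a valid logarithm identity and should simply be replaced by this equality.

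The genuine gap is in making $c_1$ depend only on $\Phi$ and $v$. Your final lower bound is $\log\bigl|\sum_i\Phi_{P_i}(\gamma_i)\bigr|_v\ge\tfrac12 q^{dD}\delta$ with $\delta=\hhat_v(\gamma_1)-q^{-d}\max_{i>1}\hhat_v(\gamma_i)$, so after factoring out $q^{d(D-n_0)}$ the constant you produce is $\tfrac12\delta q^{dn_0}$, which depends on the $\gamma_i$'s through $\delta$; you flag this as a ``delicate point'' but do not resolve it, and appealing to Lemma~\ref{lem:lower bound canonical} (a global lower bound on $\hhat$, not $\hhat_v$) does not help at a single place. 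The clean fix, which is what the paper does, is to use the exact identity $\log|\Phi_{P_1}(\gamma_1)|_v=q^{d(D-n_0)}\log|\Phi_{t^{n_0}}(\gamma_1)|_v$ and observe that the very choice of $n_0$ (entering the ``large'' regime of Lemma~\ref{jumps}) forces $|\Phi_{t^{n_0}}(\gamma_1)|_v\ge L_v:=2M_v$, a quantity depending only on $\Phi$ and $v$. Hence one may take $c_1$ to be any positive constant $\le\log L_v$ (the paper uses $\log(2L_v/3)\ge\log(4/3)$), and all $\gamma$-dependence is absorbed into $n_0$, exactly as the statement of the lemma allows.
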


\begin{proof}
Let $M_v\ge 1$ be the real number as in the conclusion of Lemma~\ref{jumps} and let $L_v:=2M_v$. Then for each $i=2,\dots, r$ such that $\hhat_v(\gamma_i)=0$, we must have that for all polynomials $Q_i\in\Fq[t]$ then 
\begin{equation}
\label{eq:3rd local canonical 0}
|\Phi_{Q_i}(\gamma_i)|_v< L_v. 
\end{equation}
Similarly, according to Lemma~\ref{lem:local canonical}, we have that for each $i=1,\dots, r$ for which $\hhat_v(\gamma_i)>0$ (note that our hypothesis yields that $\hhat_v(\gamma_1)>0$) then for all polynomials $Q_i\in\Fq[t]$ of degree larger than some positive integer $n_0$ (which depends only on $v$ and on the $\gamma_i$'s),
\begin{equation}
\label{eq:3rd local canonical}
\log\left|\Phi_{Q_i}(\gamma_i)\right|_v=q^{d\deg(Q_i)}\cdot \hhat_v(\gamma_i).
\end{equation} 
Furthermore, for each $i$ as in equation \eqref{eq:3rd local canonical}, as long as $\deg(Q_i)$ is sufficiently large, then we have that 
\begin{equation}
\label{eq:3rd local canonical 1}
|\Phi_{Q_i}(\gamma_i)|_v\ge L_v>M_v.
\end{equation}
Combining equations \eqref{eq:3rd local canonical 0}, \eqref{eq:3rd local canonical} and \eqref{eq:3rd local canonical 1}, coupled with our hypotheses \eqref{eq:hypothesis local canonical 1} and~(i) from Lemma~\ref{lem:local canonical leading term}, we obtain that $\left|\Phi_{P_1}(\gamma_1)\right|_v>\max_{i>1}\left|\Phi_{P_i}(\gamma_i)\right|_v$ and therefore,  
\begin{equation}
\label{eq:4th local canonical}
\left|\sum_{i=1}^r\Phi_{P_i}(\gamma_i)\right|_v=\left|\Phi_{P_1}(\gamma_1)\right|_v > M_v.  
\end{equation}
Finally, combining \eqref{eq:4th local canonical} with Lemma~\ref{jumps} (see also equation \eqref{eq:3rd local canonical}), we obtain that 
$$\log\left|\sum_{i=1}^r\Phi_{P_i}(\gamma_i)\right|_v= \log\left|\Phi_{P_1}(\gamma_1)\right|_v = 
 q^{d\cdot (\deg(P_1)-n_0)}\cdot \log\left|\Phi_{t^{n_0}}(\gamma_1)\right|_v.$$
Since $|\Phi_{t^{n_0}}(\gamma_1)|_v\ge L_v>M_v$ and therefore, letting $c_1:=\log(2L_v/3)\ge \log(4/3)>0$, we obtain the desired conclusion in Lemma~\ref{lem:local canonical leading term}.
\end{proof}

The following proposition is the key technical result required to
prove Theorem \ref{Siegel} and it is the generalization of \cite[Proposition~3.12]{Siegel-Drinfeld} in the case of arbitrary function fields $K$.

\begin{proposition}
\label{prop:main}
 Let $\alpha\in K$, let $\Gamma$ be a torsion-free $\Phi$-submodule of $\bG_a(K)$, and let $\gamma_1, \dots, \gamma_r$ be a basis for the $\Phi$-submodule $\Gamma$. For each $i\in\{1,\dots,r\}$ let $(P_{n,i})_{n\in\mathbb{N}}\subset\Fq[t]$ be a sequence of polynomials such that for each $m\ne n$, the $r$-tuples $(P_{n,i})_{1\le i\le r}$ and $(P_{m,i})_{1\le i\le r}$ are distinct.  Then there exists a place $v\in M_K$
  such that
\begin{equation}
\label{positive}
\limsup_{n\to\infty} \frac{ \log |\sum_{i=1}^r
\Phi_{P_{n,i}}(\gamma_i) - \alpha |_v}{\sum_{i=1}^r q^{d\deg
P_{n,i}}}> 0.
\end{equation}
\end{proposition}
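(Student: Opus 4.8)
Set $\beta_n:=\sum_{i=1}^r\Phi_{P_{n,i}}(\gamma_i)$ and $N_n:=\max_i\deg P_{n,i}$; since $\sum_i q^{d\deg P_{n,i}}\le r\,q^{dN_n}$ it suffices to find a place $v\in M_K$ and a constant $c>0$ with $\log|\beta_n-\alpha|_v\ge c\,q^{dN_n}$ for infinitely many $n$. The plan hinges on rewriting $\beta_n$ via the fact that $\Phi\colon A\to K\{\tau\}$ is an $\Fq$-linear ring homomorphism: writing $P_{n,i}=\sum_k a_{n,i,k}t^k$ with $a_{n,i,k}\in\Fq$ one has $\beta_n=\sum_{k\ge 0}\Phi_{t^k}(\delta_{n,k})$ with $\delta_{n,k}:=\sum_i a_{n,i,k}\gamma_i$ ranging over the \emph{finite} set $\Delta:=\{\sum_i c_i\gamma_i:c_i\in\Fq\}$; since $\gamma_1,\dots,\gamma_r$ is an $A$-basis of a torsion-free module, every nonzero element of $\Delta$ is non-torsion, and $N_n=\max\{k:\delta_{n,k}\ne 0\}$.

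I would then pass to a subsequence controlling the top of $\beta_n$: for each $n$ let $0=\ell^{(n)}_0<\ell^{(n)}_1<\cdots$ enumerate $\{\ell:\delta_{n,N_n-\ell}\ne 0\}$, and by a diagonal argument assume that for every $j$ either $\ell^{(n)}_j$ is eventually a constant $\bar\ell_j$ with $\delta_{n,N_n-\bar\ell_j}$ eventually a constant $\bar\delta_j$, or $\ell^{(n)}_j\to\infty$. If some $\ell^{(n)}_j\to\infty$, let $m$ be the first such index. Then for $n$ large $\beta_n=\Phi_{t^{N_n-D}}(\eta)+\rho_n$ where $D:=\bar\ell_{m-1}$, $\eta:=\sum_{j=0}^{m-1}\Phi_{t^{D-\bar\ell_j}}(\bar\delta_j)$, and $\rho_n$ is a sum of terms $\Phi_Q(\delta)$, $\delta\in\Delta$, of degree $\le N_n-g_n$ with $g_n:=\ell^{(n)}_m\to\infty$. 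The element $\eta$ is fixed, and since $\bar\delta_0\ne 0$ the $\gamma_i$-coefficient of $\eta$ has degree exactly $D$ for every $i$ in the support of $\bar\delta_0$, so $\eta$ is non-torsion; hence $\hhat(\eta)=\sum_v\hhat_v(\eta)>0$ and we may fix $v$ with $\hhat_v(\eta)>0$. By Lemma~\ref{jumps}, $\log|\Phi_{t^{N_n-D}}(\eta)|_v=q^{d(N_n-D)}\hhat_v(\eta)$ for $n$ large, while by Lemmas~\ref{jumps}--\ref{lem:local canonical} (and the fact that $|\Phi_Q(\delta)|_v$ stays bounded when $\hhat_v(\delta)=0$) $\log|\rho_n-\alpha|_v\le\max\bigl(q^{d(N_n-g_n)}\max_{\delta\in\Delta}\hhat_v(\delta),\,O(1)\bigr)$; as $g_n\to\infty$ and $D$ is fixed the first term eventually dominates strictly, so $\log|\beta_n-\alpha|_v=q^{d(N_n-D)}\hhat_v(\eta)$ and $\log|\beta_n-\alpha|_v/\sum_i q^{d\deg P_{n,i}}\ge \hhat_v(\eta)/(r\,q^{dD})>0$ for $n$ large, settling this case.

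The remaining case --- where no $\ell^{(n)}_j\to\infty$, so $\beta_n$ has no fixed bounded-width leading block --- is the hard one, and is where Lemma~\ref{lem:local canonical leading term} should be used. Regrouping by $\Delta$ via $Q_{n,\delta}:=\sum_{k:\delta_{n,k}=\delta}t^k$ gives $\beta_n=\sum_{\delta\in\Delta}\Phi_{Q_{n,\delta}}(\delta)$ with $\deg Q_{n,\delta_0}=N_n$ \emph{strictly} larger than all other $\deg Q_{n,\delta}$, where $\delta_0:=\bar\delta_0$ is non-torsion; absorbing the finitely many terms with $\deg Q_{n,\delta}$ bounded (made constant along a further subsequence) into $\alpha$, we reach $\beta_n-\alpha=\Phi_{Q_{n,\delta_0}}(\delta_0)+\sum_{\delta\in\Delta'}\Phi_{Q_{n,\delta}}(\delta)-\alpha_0$ with $\alpha_0\in K$ fixed and $\deg Q_{n,\delta}\to\infty$ for $\delta\in\Delta'$. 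Lemma~\ref{lem:local canonical leading term}, applied to $(\delta_0,(\delta)_{\delta\in\Delta'})$ with polynomials $(Q_{n,\delta_0},(Q_{n,\delta})_{\delta\in\Delta'})$ --- whose hypotheses (i)--(ii) hold for $n$ large --- then yields $\log|\beta_n-\alpha|_v>c_1q^{d(N_n-n_0)}$, and we conclude as before, \emph{provided} we can first exhibit a place $v$ with $\hhat_v(\delta_0)>\tfrac1{q^d}\max_{\delta\in\Delta'}\hhat_v(\delta)$. Producing this place is the crux: it is immediate if $\Delta'=\varnothing$ (any $v$ with $\hhat_v(\delta_0)>0$ works), but otherwise, if no place satisfies the inequality, then $\hhat_v(\delta_0)\le\tfrac1{q^d}\max_{\delta\in\Delta'}\hhat_v(\delta)$ at every $v$, and I would break this by a recursion on the finite set $\Delta$ --- peeling a different non-torsion leading combination off $\beta_n$ for which a suitable place can be found --- exploiting that the product formula concentrates $\hhat(\delta_0)=\sum_v\hhat_v(\delta_0)>0$ on finitely many places. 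This recursion is where the most work lies and where the argument must depart from the single-$\infty$-place technique of \cite{Siegel-Drinfeld}.
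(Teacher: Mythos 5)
Your decomposition of $\beta_n$ via the $\Fq$-coefficients of the $P_{n,i}$ is a genuinely different starting point from the paper (which works directly with polynomial division and an induction on the rank $r$), and your ``Case~1'' argument --- extracting a fixed non-torsion leading block $\eta$ when one of the gaps $\ell^{(n)}_j$ escapes to infinity, then choosing $v$ with $\hhat_v(\eta)>0$ and beating the tail $\rho_n$ ultrametrically --- is correct and is a pleasant alternative to the paper's appeal to the inductive hypothesis.

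However, the argument has a real gap exactly where you flag it, in ``Case~2.'' After regrouping $\beta_n=\sum_{\delta\in\Delta}\Phi_{Q_{n,\delta}}(\delta)$, you need a place $v$ with $\hhat_v(\delta_0)>\tfrac1{q^d}\max_{\delta\in\Delta'}\hhat_v(\delta)$, and if none exists you propose a ``recursion on the finite set $\Delta$.'' But $\Delta$ is the fixed $\Fq$-span of the $\gamma_i$'s, and the assumption $\hhat_v(\delta_0)\le\tfrac1{q^d}\max_{\delta\in\Delta'}\hhat_v(\delta)$ at every $v$ only yields $\hhat(\delta_0)\le\tfrac1{q^d}\sum_{\delta\in\Delta'}\hhat(\delta)$, which is a static inequality among heights of fixed elements of $\Gamma$ and produces no contradiction. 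For instance, with $r=2$, $P_{n,1}=t^n+t^{n-1}$, $P_{n,2}=t^n$, your $\delta_0=\gamma_1+\gamma_2$ and $\Delta'=\{\gamma_1\}$, and nothing forbids $\hhat_v(\gamma_1+\gamma_2)\le\tfrac1{q^d}\hhat_v(\gamma_1)$ at every place; there is no obvious ``different non-torsion leading combination'' to peel off within $\Delta$, because all gaps are already bounded. The missing idea, which is the crux of the paper's argument, is that the recursion must pass to \emph{$\Phi$-linear} (not merely $\Fq$-linear) combinations of the generators: the paper performs polynomial division $P_{n,i}=P_{n,1}C_{n,i}+R_{n,i}$, replaces $\gamma_1$ by $\delta_1=\gamma_1+\sum_{j>1}\Phi_{C_j}(\gamma_j)$, and shows that the failure of the ``good place'' condition at every $v$ forces $\hhat_v(\delta_1)\le\tfrac1{q^d}\max_i\hhat_v(\gamma_i)$; iterating this over all indices and then repeating the whole cycle produces elements whose global canonical heights shrink geometrically (by a factor $q^{-d}$ per cycle), which eventually contradicts the positive lower bound of Lemma~\ref{lem:lower bound canonical} since $\Gamma$ is torsion-free. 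Your framework, confined to the fixed finite set $\Delta$, has no mechanism to make the heights decrease, and hence no termination argument; to repair it you would need to enlarge $\Delta$ to $\Phi$-combinations and supply the height-shrinking and Lehmer-bound argument that the paper uses.
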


\begin{proof}
The hypothesis on the $r$-tuples $(P_{n,i})_{1\le i\le r}$ implies that
\begin{equation*}
	\lim_{n \to \infty} \sum_{i=1}^r q^{d\deg P_{n,i}}=+\infty.
\end{equation*}
Combining this with the triangle inequality for the $v$-adic norm, we have that the sought statement is equivalent to the existence of a place $v$ such that
\begin{equation}
\label{positive simplified}
\limsup_{n\to\infty} \frac{ \log |\sum_{i=1}^r
	\Phi_{P_{n,i}}(\gamma_i)|_v}{\sum_{i=1}^r q^{d\deg
		P_{n,i}}}> 0.
\end{equation}
Observe that it is sufficient to prove it for a subsequence $(n_k)_{k\ge 1} \subset \bN$, since passing to a subsequence can only lower the $\limsup$. We will repeatedly use this fact during the proof and we will drop the extra indexes in order to lighten the notation.
	
The proof proceeds by induction on $r$. If $r=1$, since $\gamma_1$ is non-torsion, there exists a place $v$ such that $\hhat_v(\gamma_1)>0$ and the conclusion follows from Lemma~\ref{lem:local canonical}. We can then assume that \eqref{positive simplified} holds true for all $\Phi$-submodules of rank less than $r$ and we are going to prove it for all $\Phi$-submodules of rank $r$.

Let $S_0$ be the set of places $v \in M_K$ such that $\hhat_v(\gamma)>0$ for some $\gamma \in \Gamma$. This set is finite, as proved in \cite[Fact~3.13]{Siegel-Drinfeld}. Furthermore, for any place $w$ outside the set $S_0$, we have that $|\gamma|_w$ is uniformly bounded above for all $\gamma\in \Gamma$ (see Lemma~\ref{jumps}).

If there exists $j$ such that
\begin{equation}
\label{lower rank reduction}
	\lim_{n \to \infty} \frac{q^{d\deg P_{n,j}}}{\sum_{i=1}^r q^{d\deg P_{n,i}}}=0
\end{equation}
then the conclusion follows from the inductive hypothesis. Indeed, by the inductive hypothesis, there exists a place $v$  such that
\begin{equation}
\label{exp}
	\limsup_{n\to\infty} \frac{ \log |\sum_{i\neq j} \Phi_{P_{n,i}}(\gamma_i)|_v}{\sum_{i\neq j} q^{d\deg P_{n,i}}}> 0.
\end{equation}
So, in particular, there exists some $i\ne j$ such that $|\Phi_{P_{n,i}}(\gamma_i)|_v\to\infty$ as $n\to\infty$ (also note that the denominator from \eqref{exp} must go to infinity due to equation \eqref{lower rank reduction}). So, $\hhat_v(\gamma_i)>0$ (by Lemma~\ref{jumps}), which means that  actually $v\in S_0$. Combining \eqref{exp} with \eqref{lower rank reduction} gives
\begin{equation}
\label{positive no j}
\limsup_{n\to\infty} \frac{ \log |\sum_{i\neq j} \Phi_{P_{n,i}}(\gamma_i)|_v}{\sum_{i=1}^r q^{d\deg P_{n,i}}}> 0.
\end{equation}
We distinguish two cases. If $\hhat_v(\gamma_j)=0$, then $\{|\Phi_{Q}(\gamma_j)|_v\}_{Q \in \bF_q[t]}$ is bounded, so that for large enough $n$ we have
\begin{equation*}
	\left| \sum_{i\neq j} \Phi_{P_{n,i}}(\gamma_i)  \right|_v=\left|\sum_{i=1}^r \Phi_{P_{n,i}}(\gamma_i)  \right|_v
\end{equation*}
and the result follows from \eqref{positive no j}.

If $\hhat_v(\gamma_j)>0$ but $\deg(P_{n,j})\to\infty$ as $n\to\infty$, we apply Lemma~\ref{lem:local canonical} to get
\begin{equation*}
	\lim_{n \to \infty} \frac{ \log |\Phi_{P_{n,j}}(\gamma_j)|_v}{\sum_{i=1}^r q^{d\deg P_{n,i}}}=\hhat_v(\gamma_j) \cdot  \lim_{n \to \infty} \frac{q^{d\deg P_{n,j}}}{\sum_{i=1}^r q^{d\deg P_{n,i}}}=0.
\end{equation*}
Now, if $\deg(P_{n,j})$ is bounded above, then also $\left|\Phi_{P_{n,j}}(\gamma_j)\right|_v$ is bounded above and therefore, once again
\begin{equation*}
\lim_{n \to \infty} \frac{ \log |\Phi_{P_{n,j}}(\gamma_j)|_v}{\sum_{i=1}^r q^{d\deg P_{n,i}}}=0.
\end{equation*}
Thus, combining the fact that $\lim_{n \to \infty} \frac{ \log |\Phi_{P_{n,j}}(\gamma_j)|_v}{\sum_{i=1}^r q^{d\deg P_{n,i}}}=0$ with equation \eqref{positive no j}, gives
\begin{equation}
\label{max min derivation last eq}
\left| \sum_{i\neq j} \Phi_{P_{n,i}}(\gamma_i)  \right|_v=\left|\sum_{i=1}^r \Phi_{P_{n,i}}(\gamma_i)  \right|_v
\end{equation}
for large enough $n$. We then derive \eqref{positive simplified} using \eqref{lower rank reduction} and \eqref{positive no j}.

We therefore assume that there is no $j$ for which \eqref{positive no j} holds. Equivalently, there exists $B \ge 1$ such that for any $n$ we have
\begin{equation*}
	\frac{\max_{1\le i \le r}q^{d\deg P_{n,i}}}{\min_{1\le i \le r}q^{d\deg P_{n,i}}} \le B,
\end{equation*}
which is the same as
\begin{equation}
\label{max min difference}
	\max_{1\le i \le r}\deg P_{n,i}-\min_{1\le i \le r}\deg P_{n,i}\le \frac{\log_qB}{d}.
\end{equation}

We will proceed doing analysis at the places $v\in S_0$; note that $S_0$ is non-empty since each $\gamma_i$ is non-torsion (and so, there are places $v$ such that $\hhat_v(\gamma_i)>0$). 

The strategy of the proof goes as follows: if we cannot prove directly \eqref{positive simplified}, then we find $\delta_1,\dots,\delta_r \in \Gamma$, a sequence $(n_k)_{k\ge 1}\subset \bN$ and a sequence of $r$-tuples of polynomials $(R_{k,i})_{1\le i\le r}$ such that
\begin{equation}
\label{new sequence equality 2}
	\sum_{i=1}^r \Phi_{P_{n_k,i}}(\gamma_i)=\sum_{i=1}^r \Phi_{R_{k,i}}(\delta_i) 
\end{equation}
and
\begin{equation*}
	0<\liminf_{k\to\infty} \frac{\sum_{i=1}^r q^{d\deg P_{n_k,i}}}{\sum_{i=1}^r q^{d\deg R_{k,i}}}\le \limsup_{k\to\infty} \frac{\sum_{i=1}^r q^{d\deg P_{n_k,i}}}{\sum_{i=1}^r q^{d\deg R_{k,i}}}< +\infty.
\end{equation*}
Thanks to these two relations, we have that if there exists a place $v$ for which
\begin{equation*}
\limsup_{k\to\infty} \frac{ \log |\sum_{i=1}^r
	\Phi_{R_{k,i}}(\delta_i)|_v}{\sum_{i=1}^r q^{d\deg
		R_{k,i}}}> 0,
\end{equation*}
then also
\begin{equation*}
\limsup_{k\to\infty} \frac{ \log |\sum_{i=1}^r
	\Phi_{P_{n_k,i}}(\gamma_i) |_v}{\sum_{i=1}^r q^{d\deg
		P_{n_k,i}}}> 0.
\end{equation*}
In this way we reduce to proving \eqref{positive simplified} for the $\delta_i$'s and the sequence of $r$-tuples of polynomials $(R_{k,i})_{1\le i\le r}$. We will choose them in such a way that the process cannot go on forever, thereby showing that \eqref{positive simplified} has to hold at a finite step for a suitable place $v$.

We claim that, if there exist $j$ and $v$ such that
\begin{equation*}
	\hhat_v(\gamma_j)>\frac{1}{q^d}\max_{i \neq j} \hhat_v(\gamma_i)
\end{equation*}
and
\begin{equation*}
	\deg P_{n,j}>\max_{i \neq j} \deg P_{n,i}
\end{equation*}
for all $n$ big enough, then inequality \eqref{positive simplified} holds for the place $v$. Indeed, using \eqref{max min difference}, we have that for any $i$ the degrees $\deg P_{n,i}$ go to infinity as $n \to \infty$. This, together with the two inequalities above, allows us to apply Lemma~\ref{lem:local canonical leading term} to obtain (for a suitable positive integer $n_0$) that 
\begin{equation*}
	\limsup_{n\to\infty} \frac{ \log |\sum_{i=1}^r \Phi_{P_{n,i}}(\gamma_i)|_v}{\sum_{i=1}^r q^{d\deg P_{n,i}}}>c_1q^{-dn_0} \cdot \limsup_{n\to\infty} \frac{q^{d\deg P_{n,j}}}{\sum_{i=1}^r q^{d\deg P_{n,i}}}>0,
\end{equation*}
where we used again \eqref{max min difference}.

In particular, if there exists $j$ such that
\begin{equation*}
	\deg P_{n,j}>\max_{i \neq j} \deg P_{n,i}
\end{equation*}
for all $n$ big enough, either \eqref{positive simplified} follows, or we can assume that for \emph{all} places $v$ we have
\begin{equation*}
	\hhat_v(\gamma_j) \le \frac{1}{q^d}\max_{i \neq j} \hhat_v(\gamma_i) \le \frac{1}{q^d}\max_{1\le i \le r} \hhat_v(\gamma_i).
\end{equation*}

We will now describe the construction of the $\delta_i$'s and $R_{n,i}$'s we referred to before. In the first step of our process, for all $n$ and each $i>1$,  we divide (with quotient and remainder) $P_{n,i}$ by $P_{n,1}$, obtaining
\begin{equation*}
	P_{n,i}=P_{n,1}\cdot C_{n,i}+R_{n,i},
\end{equation*}
so that $\deg R_{n,i}<\deg P_{n,1}$; also, we note that $\deg R_{n,i}\le \deg P_{n,i}$. We also let $R_{n,1}:=P_{n,1}$.  From \eqref{max min difference} the degrees $\deg C_{n,i}$ are uniformly bounded as $n \to \infty$. It follows that there are only finitely many possible polynomials $C_{n,i}$, so that, passing to a subsequence $(n_k)_{k\ge 1}$, we may assume that there exist polynomials $C_i$ satisfying
\begin{equation*}
	C_{n,i}=C_i
\end{equation*}
for all $n$ (where we dropped the index $k$ of the subsequence).

Let
\begin{equation*}
\delta_i =
\begin{cases*}
\gamma_1 + \sum_{j=2}^r \Phi_{C_j}(\gamma_j) & if $i=1$ \\
\gamma_i       & otherwise.
\end{cases*}
\end{equation*}

Observe that for each $n$, we have
\begin{equation}
\label{new sequence equality}
\sum_{i=1}^r \Phi_{P_{n,i}}(\gamma_i)=\sum_{i=1}^r \Phi_{R_{n,i}}(\delta_i),
\end{equation}
as it follows from the definition of the $\delta_i$'s and of the $R_{n,i}$'s.

Also, since $\deg R_{n,i} \le \deg P_{n,i}$ for all $n$ and $i$, we have
\begin{equation*}
	\sum_{i=1}^r q^{d\deg R_{n,i}}\le \sum_{i=1}^r q^{d\deg P_{n,i}},
\end{equation*}
which implies 
\begin{equation*}
	0<\liminf_{n\to\infty} \frac{\sum_{i=1}^r q^{d\deg P_{n,i}}}{\sum_{i=1}^r q^{d\deg R_{n,i}}}.
\end{equation*}
Using that $R_{n,1}=P_{n,1}$ we obtain
\begin{align*}
	\frac{\sum_{i=1}^r q^{d\deg P_{n,i}}}{\sum_{i=1}^r q^{d\deg R_{n,i}}}&=\frac{\sum_{i=1}^r q^{d\deg P_{n,i}}}{q^{d\deg P_{n,1}}}\cdot \frac{q^{d\deg P_{n,1}}}{q^{d\deg P_{n,1}}+\sum_{i=2}^r q^{d\deg R_{n,i}}}\\
	&\le \frac{\sum_{i=1}^r q^{d\deg P_{n,i}}}{q^{d\deg P_{n,1}}}\\
	&\le \frac{r q^{d\max_i{\deg P_{n,i}}}}{q^{d\deg P_{n,1}}}\\
	&\le rB,
\end{align*}
where we used \eqref{max min difference} in the last step. In particular we derive
\begin{equation}
\label{new sequence inequality}
0<\liminf_{n\to\infty} \frac{\sum_{i=1}^r q^{d\deg P_{n,i}}}{\sum_{i=1}^r q^{d\deg R_{n,i}}}\le \limsup_{n\to\infty} \frac{\sum_{i=1}^r q^{d\deg P_{n,i}}}{\sum_{i=1}^r q^{d\deg R_{n,i}}}< +\infty.
\end{equation}
As explained before, thanks to \eqref{new sequence equality} and \eqref{new sequence inequality}, we can reduce the problem to the study of the $\delta_i$'s and the sequence of $r$-tuples of polynomials $(R_{n,i})_{1\le i\le r}$.

In order for our strategy to work, we will also need that \eqref{max min difference} has to hold for the polynomials $R_{n,i}$, that is, we want
\begin{equation*}
	\max_{1\le i \le r}\deg R_{n,i}-\min_{1\le i \le r}\deg R_{n,i}\le \frac{\log_qB'}{d}.
\end{equation*}
for a suitable constant $B'$. However, arguing as we did from \eqref{lower rank reduction} to \eqref{max min derivation last eq}, if the above is not satisfied, then there exists a place $w$ such that
\begin{equation*}
\limsup_{n\to\infty} \frac{ \log |\sum_{i=1}^r
	\Phi_{R_{n,i}}(\delta_i)|_w}{\sum_{i=1}^r q^{d\deg
		R_{n,i}}}> 0.
\end{equation*}
Using \eqref{new sequence equality} and \eqref{new sequence inequality} we obtain
\begin{equation*}
\limsup_{n\to\infty} \frac{ \log |\sum_{i=1}^r
	\Phi_{P_{n,i}}(\gamma_i) |_w}{\sum_{i=1}^r q^{d\deg
		P_{n,i}}}> 0,
\end{equation*}
thereby proving the statement. We may then assume that \eqref{max min difference} holds also for the $R_{n,i}$'s (possibly with a different constant $B$).

Since
\begin{equation*}
	\deg R_{n,1}>\max_{i>1} \deg R_{n,i}
\end{equation*}
for all $n$, as we previously observed, if there exists a place $v$ such that
\begin{equation*}
	\hhat_v(\delta_1) > \frac{1}{q^d}\max_{i>1} \hhat_v(\delta_i),
\end{equation*}
then \eqref{positive simplified} holds at the place $v$. Therefore, we may assume that for all places $v$ we have
\begin{equation*}
	\hhat_v(\delta_1) \le \frac{1}{q^d}\max_{i>1} \hhat_v(\delta_i),
\end{equation*}
from which we get
\begin{equation}
\label{inductive inequality 1}
\hhat_v(\delta_1) \le \frac{1}{q^d}\max_{1\le i \le r} \hhat_v(\gamma_i),
\end{equation}
where we used that $\delta_i=\gamma_i$ for $i>1$.

We now repeat the above construction of the $\delta_i$'s and $R_{n,i}$'s using $R_{n,2}$ in place of $P_{n,1}$, that is, we will proceed dividing by $R_{n,2}$ each of the $R_{n,i}$'s for $i\ne 2$. We will still denote the new polynomials with the letter $R$ to not cluster the notation. We will then find a subsequence of $\mathbb{N}$, polynomials $R_{n,i}$ and  $\delta_2$ (given by a suitable $\Phi$-linear combination of $\delta_1$ and of $\gamma_i$ for $i \ge 2$) for which both \eqref{new sequence equality} and \eqref{new sequence inequality} hold (where $\delta_i=\gamma_i$ for $i\ge 3$).

As before, either the result follows or we can assume that \eqref{max min difference} holds and for all places $v$ we have
\begin{equation*}
\hhat_v(\delta_2) \le \frac{1}{q^d}\max_{i \neq 2} \hhat_v(\delta_i).
\end{equation*}
From this and \eqref{inductive inequality 1} we get
\begin{equation*}
\hhat_v(\delta_2) \le  \frac{1}{q^d}\max_{1\le i \le r} \hhat_v(\gamma_i)
\end{equation*}
for all places $v$.

Continuing this process for all $i$ up to $r$, either we obtain \eqref{positive simplified}, or we find a subsequence of $\mathbb{N}$, polynomials $R_{n,i}$ satisfying \eqref{max min difference} and $\delta_1,\dots,\delta_r$ (given by $\Phi$-linear combinations of the original $\gamma_i$'s) satisfying \eqref{new sequence equality} and \eqref{new sequence inequality}, and such that
\begin{equation}
\label{inductive inequality 2}
\hhat_v(\delta_j) \le  \frac{1}{q^d}\max_{1\le i \le r} \hhat_v(\gamma_i)
\end{equation}
for all $j$ and all places $v$. By summing over $v \in S_0$, we find
\begin{equation*}
\hhat(\delta_j) \le  \sum_{v \in S_0} \frac{1}{q^d}\max_{1\le i \le r} \hhat_v(\gamma_i) \le \frac{1}{q^d} \sum_{1\le i \le r} \hhat(\gamma_i)
\end{equation*}
for any $j$.

Repeating the above procedure with the $\delta_i$'s in place of the $\gamma_i$'s, either we can find $v$ such that \eqref{positive simplified} holds, or we construct $\epsilon_1,\dots,\epsilon_r$ such that for all $j$ and $v$ we have
\begin{equation*}
\hhat_v(\epsilon_j) \le  \frac{1}{q^d} \max_{1\le i \le r} \hhat_v(\delta_i).
\end{equation*}
This, combined with \eqref{inductive inequality 2}, gives
\begin{equation*}
\hhat_v(\epsilon_j) \le  \frac{1}{q^{2d}} \max_{1\le i \le r} \hhat_v(\gamma_i)
\end{equation*}
for all $j$ and $v$. Summing over $v \in S_0$ we find
\begin{equation*}
\hhat(\epsilon_j) \le \frac{1}{q^{2d}} \sum_{1\le i \le r} \hhat(\gamma_i)
\end{equation*}
for all $j$. More generally, repeating this process $\ell$ times, one would find elements $\alpha_1^{(\ell)},\dots, \alpha_r^{(\ell)} \in \Gamma$ for which
\begin{equation}
\label{elleq}
\hhat\left(\alpha_j^{(\ell)}\right)\le \frac{1}{q^{\ell d}} \sum_{1\le i \le r} \hhat(\gamma_i)
\end{equation}
for each $j=1,\dots, r$. 
However, by Lemma~\ref{lem:lower bound canonical} there exists a positive constant $c_0$ such that $\hhat(x)\ge c_0$ for all non-torsion $x \in K$. Since $\Gamma$ is torsion-free, the process has to stop at some finite step. Indeed,  otherwise we would get that for large enough $\ell$, the right hand side in the inequality \eqref{elleq} is smaller than $c_0$ because the $\gamma_i$'s are given and therefore $\alpha_j^{(\ell)}$ must be a torsion point in $\Gamma$ and so, $\alpha_j^{(\ell)}=0$ for each $j$. But then $\sum_{i=1}^r \Phi_{P_{n,i}}(\gamma_i)=0$ (see \eqref{new sequence equality}, which holds at each step in our process) for all $n$ sufficiently large, which contradicts the fact that the $\gamma_i$'s are a basis for the $\Phi$-submodule $\Gamma$ (and the $r$-tuples $(P_{n,i})_{i=1}^r$ are distinct). Therefore, there exists a place $v$ for which \eqref{positive simplified} must hold.
\end{proof}

Theorem~\ref{Siegel} follows identically as the proof of \cite[Theorem~2.4]{Siegel-Drinfeld} once we replace \cite[Proposition~3.12]{Siegel-Drinfeld} by Proposition~\ref{prop:main}.

\def\cprime{$'$} \def\cprime{$'$} \def\cprime{$'$} \def\cprime{$'$}
\providecommand{\bysame}{\leavevmode\hbox to3em{\hrulefill}\thinspace}
\providecommand{\MR}{\relax\ifhmode\unskip\space\fi MR }
\providecommand{\MRhref}[2]{%
  \href{http://www.ams.org/mathscinet-getitem?mr=#1}{#2}
}
\providecommand{\href}[2]{#2}

\end{document}